\newtheorem{lemma}{Lemma}[section]
\newtheorem{theorem}{Theorem}[section]
\newtheorem{definition}{Definition}[section]
\newtheorem{remark}{Remark}[section]
\numberwithin{equation}{section} \numberwithin{theorem}{section}
\numberwithin{example}{section} \numberwithin{remark}{section}
\numberwithin{figure}{section} \numberwithin{algorithm}{section}
\def\umean{\overline{\vu}}
\def\mR{{\mathbb R}}
\def\mS{{\mathbb S}^2}
\def\ssperp{{\scriptstyle{\perp}}}
\def\Deltainv {\Delta^{\!\scriptstyle{-1}}}
\def\vu{{\mathbf u}}
\def\vup{\vu^\ssperp}
\def\vuirr{\vu_\textnormal{irr}}\def\vuinc{\vu_\textnormal{inc}}
\def\vv{{\mathbf v}}
\def\lgt{\phi}
\def\lat{\theta}
\def\ep{\varepsilon}
\def\pa{\partial}
\def\va{{\mathbf a}}
\def\ve{{\mathbf e}}
\def\vet{{\mathbf e_\lat}}
\def\vep{{\mathbf e_\lgt}}
\def\bpm{\begin{pmatrix}}
\def\epm{\end{pmatrix}}
\def\pap{\pa_{\lgt}}
\def\pat{\pa_{\lat}}
\def\be{\begin{equation}}
\def\ee{\end{equation}}
\def\dv{\textnormal{\,div\,}}
\def\curl{\textnormal{\,curl\,}}
\def\grad{\nabla}
\def\rgrad{\nabla^{\ssperp}}
\def\pt{\pa_t}
\def\id{\textnormal{id}}
\def\cL{{\mathcal L}}
\def\cP{\pcL}
\def\nll{\textnormal{null}}
\def\pcL{{\displaystyle\operatorname*{\displaystyle\Pi}_{\scriptscriptstyle\nll\{\!\cL\!\}}}}
\newcommand\rev[1]{#1}
\title[Euler equations on a fast rotating sphere]{Euler equations on a fast rotating sphere\\--- time-averages and zonal flows}
\author[Bin Cheng]{Bin Cheng}
\author[Alex Mahalov]{Alex Mahalov}
\address{\newline
        School of Mathematical and Statistical Sciences\newline
    Arizona State University, Wexler Hall (PSA)\newline
    Tempe, Arizona\quad 85287-1804\quad USA}
\email[Bin Cheng]{cheng@math.asu.edu}
\email[Alex Mahalov]{Mahalov@asu.edu}
\date{\today}
\begin{document} 
\begin{abstract}Motivated by recent studies in geophysical and planetary sciences, we investigate the PDE-analytical aspects of time-averages for barotropic, inviscid flows   on a fast rotating sphere $\mS$. Of particular interests are the incompressible Euler equations.  We prove that the finite-time-average of the solution stays close to a subspace of \emph{longitude-independent  zonal flows}. The intial data can be arbitrarily far away from this subspace. Meridional variation of the Coriolis parameter underlies this phenomenon.   Our proofs use Riemannian geometric tools, in particular the Hodge Theory.\end{abstract}

\keywords{Rotating fluids, Euler equations, barotropic models on a rapidly rotating sphere, zonal flows, time-averages, PDE on surfaces.}
\maketitle

\section{\bf \bf Introduction}
Recent studies have seen increasing understandings of \emph{global} characteristics of geophysical flows on Earth and giant planets in the Solar System. Simulations and observations have persistently shown that coherent anisotropy favoring \emph{zonal flows} appears ubiquitously in    planet scale circulations. 
For a partial list of computational results, we mention \cite{Galperin:Huang:2004} for 3D models, \cite{Vallis:1993, Nozawa:1997, Huang:Galperin:2001, Galperin:2002, zonal:beta} for 2D models, and references therein. These highly resolved, eddy-permitting simulations are made possible by rapid developements of high performance computing.
On the other hand, we have observed  zonal flow patterns (bands and jets) on giant planets for hundreds of years, which has attracted considerable interests recently thanks to spacecraft missions and the launch of the Hubble Space Telescope (e.g. \cite{Jupiter:HST}, \cite{Cassini}). Figure 1 shows a composite view of the banded strucure of Jovian atmosphere captured by the Cassini spacecraft (\cite{Jupiter:link}). There are also observational data in the oceans on Earth showing persistent zonal flow patterns (e.g. \cite{Pacific:1,Pacific:2, Maximenko}).

\begin{figure}[htbp]
\centering\includegraphics[width=0.6\textwidth]{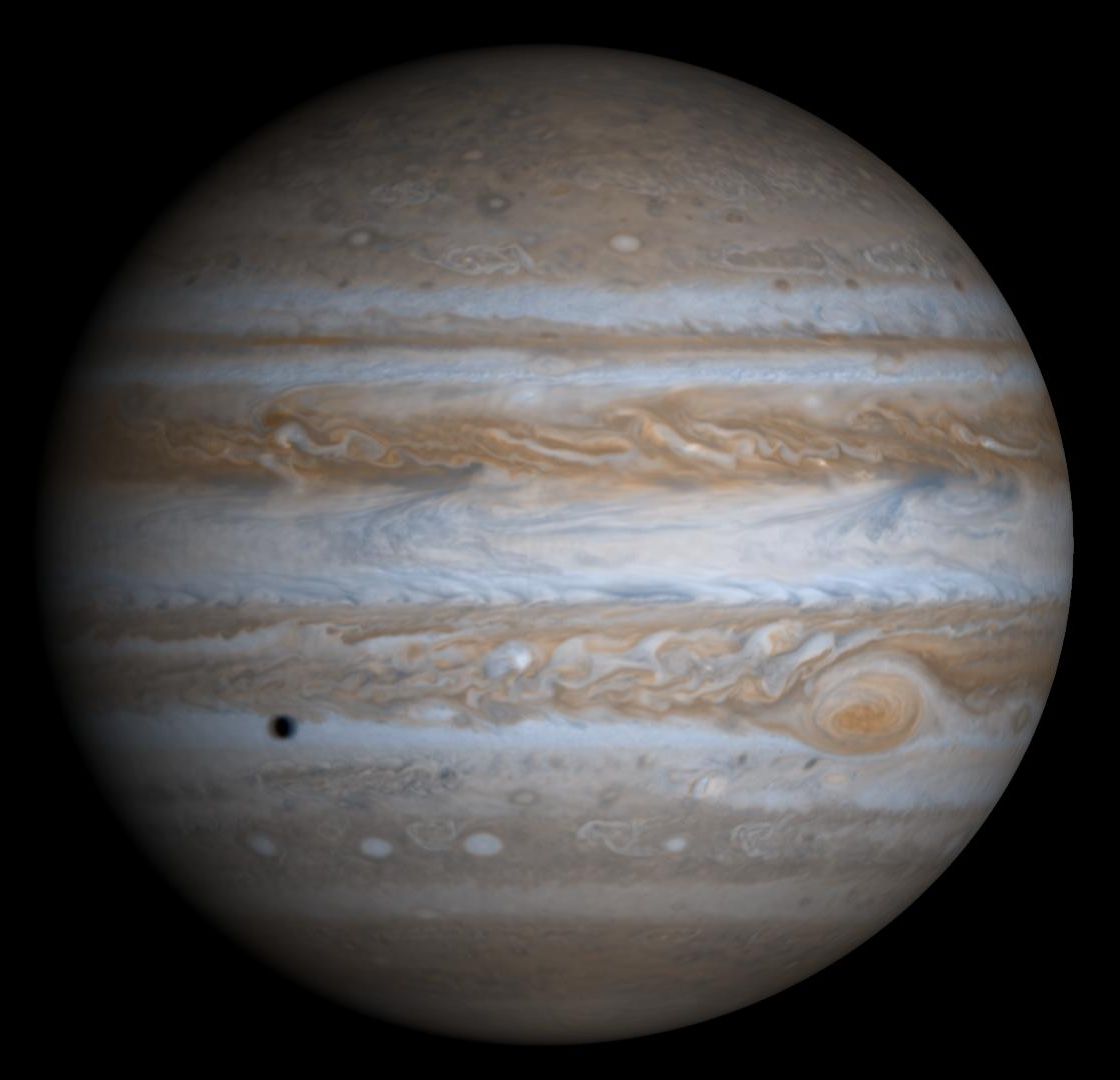}
\caption{This true-color simulated view of Jupiter is composed of 4 images taken by NASA's Cassini spacecraft on December 7, 2000. Credit: NASA/JPL/University of Arizona \cite{Jupiter:link}.}
\label{fig1}
\end{figure}

\begin{remark}\label{remark:intro}
In the existing literature, a necessary process for the zonal flow pattern to emerge is  averaging of data and/or simulations over a period of time usually decades long.  Also, a number of heuristic arguments (e.g. \cite{zonal:beta}) are made pointing to   the north-south gradient of Coriolis parameter as the underlying machanism, even in the absence of temperature/density gradient and vertical variability. 
 \end{remark}

To this end, we study   inviscid, barotropic geophysical flows on a unit sphere $\mS$ centered at the origin of $\mR^3$ and fast rotating about the $z$-axis with constant angular velocity.  Let vector field $\vu(t,q)$,  tangent to $\mS$ at every point $q\in\mS$, denote the fluid velocity {\it relative} to this rotating frame. Throughout this paper, we represent any point $q\in \mS$ either by its relative-to-the-frame cartesian coordinates $(x,y,z)$ or its relative-to-the-frame spherical coordinates $(\theta,\phi)$  with $\theta$ being the colatitude and $\phi$ the longitude\footnote{ At $z=1$, we fix $\theta=0$ but $\phi$ can be arbitrary.  Such singularity issue does not occur for cartesian coordinates $x,y,z\in C^\infty(\mS)$.}. Also, let $\vep$ be the unit vector in the zonal direction of increasing longitude and $\vet$ be the unit vector in the meridional direction of increasing colatitude.

In this study, we focus on a canonical PDE system: the incompressible Euler equations  under the Coriolis force (\cite{Ar:Kh:Hydro, Ch:Ma:Fluid, Pedlosky:Geo}),\be\label{Euler:vec}\pt\vu+\nabla_\vu\vu+\grad P={z\over\ep}\vup,\qquad \dv\vu=0\ee 
where constant $\ep$, called the Rossby number, scales like the frequency of the frame's rotation (usually $0.01\sim0.1$ at a global scale), and $\vup$ denotes a counterclockwise $\pi/2$-rotation of $\vu$ on $\mS$. Cartesian coordinate $z$  indicates how the Coriolis parameter  varies along the meridional direction. Note that the Coriolis force   is not uniformly large and actually vanishes on the equator. It is the large \emph{gradient} of the Coriolis parameter that drives the zonal flow patterns.

Note that results have been established concerning  solution regularity of the above systems and related ones in the fast rotating regime with $\ep\ll1$. Please refer to \cite{BMN:RSW, BMN:NS, Goncharov} and references therein for further discussion.

Our theoretical investigation is then focused on the fast rotating regime with $\ep\ll1$ and in particular,  the nature of the {\it time-averages} of $\vu$: \be\label{def:umean}\umean(T,\cdot):={1\over T}\int_{0}^{T}\vu(t,\cdot)\,dt\ee for positive times $T$.

The main result is stated as following.
\begin{theorem}\label{thm:zonal}Consider the incompressible Euler equations \eqref{Euler:vec}   on $\mS$ with div-free initial data  $\vu(0,\cdot)\in H^k(\mS)$ for $k\ge3$. Define the time-averaged  flow $\umean$ as in \eqref{def:umean}. Then, there exist  $\ep$-independent constants $C_0, T_0$ s.t. for any given $T\in[0,T_0/\|\vu_0\|_{H^k}]$, there exist a function $f(\cdot):[-1,1]\mapsto\mR$ and a universal constant $C$ s.t. 
\be\label{estimate:thm}\begin{split}&\left\|\umean(T,x,y,z)-\rgrad f(z)\right\|_{H^{k-3}(\mS)}\le C_0\ep({M_0\over T}+M_0^2).\end{split}\ee
Here, $M_0:=\|\vu_0\|_{H^k}$ indicates the size of initial data. In spherical coordinates, the approximation $\rgrad f(z)$ is
\[\rgrad f(z)=-f'(\cos\theta)\sin\theta\,\vep.\]
 \end{theorem}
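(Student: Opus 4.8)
The plan is to realize the Coriolis force as a skew-adjoint ``fast'' operator whose null space is exactly the space of longitude-independent zonal flows, and to pick off the zonal part of $\umean$ from a time-averaging identity in which that operator is inverted with a fixed loss of derivatives. First I would use the Hodge decomposition on $\mS$: since $\mS$ carries no nontrivial harmonic $1$-form, every divergence-free field is $\vu=\rgrad\psi$ with stream function $\psi=\Deltainv\curl\vu$, and taking $\curl$ of \eqref{Euler:vec} together with the identities $\curl(z\vup)=\vu\cdot\grad z$ and $\rgrad\psi\cdot\grad z=\pap\psi$ (the Hamiltonian flow of $z=\cos\lat$ on $\mS$ being the rigid longitudinal rotation) turns the system into the scalar vorticity equation $\pt\omega+\vu\cdot\grad\omega=\frac1\ep\pap\Deltainv\omega$, with $\omega=\curl\vu$ and $\vu=\rgrad\Deltainv\omega$. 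The ``fast'' term $\frac1\ep\pap\Deltainv\omega$ is skew-adjoint on $L^2(\mS)$, commutes with every rotation-invariant operator, and has null space equal to the functions of $z$ alone, i.e.\ to the zonal flows $\rgrad f(z)$, on which it vanishes.

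Next I would establish the $\ep$-uniform a priori bounds. Because $\pap$ and $\Deltainv$ commute with the rotation-invariant Sobolev norms, the fast term contributes nothing to any $H^s$ energy estimate (indeed $\langle\omega,\pap\Deltainv\omega\rangle_{L^2(\mS)}=0$), so the higher-order estimate reduces to the classical one for $2$D incompressible Euler; together with the conservation of $\|\vu\|_{L^2}$ this is exactly the regularity theory referenced above, and it yields $\ep$-independent $C_0,T_0$ with $\sup_{0\le t\le T_0/M_0}\|\vu(t,\cdot)\|_{H^k(\mS)}\le C_0M_0$, hence also $\|\omega(t,\cdot)\|_{H^{k-1}(\mS)}\le C_0M_0$ on that interval.

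For the averaging step, note that on the orthogonal complement of the $\lgt$-independent functions $\pap$ is boundedly invertible with no loss of derivatives, since $\pap^{-1}$ is multiplication by $1/(im)$ with $m\ne0$ on spherical harmonics. Writing $\langle\,\cdot\,\rangle_\lgt$ for the average over the longitude, the vorticity equation gives $\psi-\langle\psi\rangle_\lgt=\ep\,\pap^{-1}\!\left(\pt\omega+\vu\cdot\grad\omega\right)$; integrating over $t\in[0,T]$, dividing by $T$, and commuting $\pap^{-1}$ with the time integral,
\[\overline{\psi}-\langle\overline{\psi}\rangle_\lgt=\frac{\ep}{T}\,\pap^{-1}\!\left(\omega(T,\cdot)-\omega(0,\cdot)+\int_0^T\vu\cdot\grad\omega\,dt\right).\]
Since $\langle\overline{\psi}\rangle_\lgt$ depends on $z=\cos\lat$ alone it equals $f(z)$ for a unique $f:[-1,1]\to\mR$, and $\umean=\rgrad\overline{\psi}=\rgrad f(z)+\rgrad(\overline{\psi}-\langle\overline{\psi}\rangle_\lgt)$. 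Using $\|\rgrad\pap^{-1}g\|_{H^{k-3}(\mS)}\le C\|g\|_{H^{k-2}(\mS)}$ (one derivative from $\rgrad$, none from $\pap^{-1}$), the bound of the previous step for the first two contributions, and the product estimate $\|\vu\cdot\grad\omega\|_{H^{k-2}}\le C\|\vu\|_{H^{k-1}}\|\omega\|_{H^{k-1}}\le C_0M_0^2$ (legitimate since $k-1>1$ on a surface) for the last, one obtains $\|\umean-\rgrad f(z)\|_{H^{k-3}(\mS)}\le C_0\ep(M_0/T+M_0^2)$ after the $t$-integration and the restriction $T\le T_0/M_0$, which is \eqref{estimate:thm}; the coordinate formula $\rgrad f(z)=-f'(\cos\lat)\sin\lat\,\vep$ is an elementary computation.

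The step I expect to be the main obstacle is obtaining the $\ep$-uniform control. One must not bound the Coriolis force directly --- formally it is $O(1/\ep)$ --- but instead exploit the structural fact that, once recast as $\frac1\ep\pap\Deltainv\omega$, it is skew-adjoint and commutes with the operators used to measure regularity, so that it drops out of the energy balance entirely; this is the heart of the fast-rotation analysis. The secondary, bookkeeping-type difficulty is pinning down the exact number of lost derivatives --- one from the quadratic term $\vu\cdot\grad\omega\in H^{k-2}$ and one more from the $\rgrad$ flanking the loss-free $\pap^{-1}$ --- which is precisely what forces the $H^{k-3}$ norm on the left-hand side of \eqref{estimate:thm}. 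Everything else is the standard machinery of Hodge theory on $\mS$ and energy estimates for the Euler equations.
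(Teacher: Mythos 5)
Your proposal is correct and follows essentially the same route as the paper: the identity $\curl(z\vup)=\grad z\cdot\vu$ reducing the fast term to $\tfrac1\ep\pap\Deltainv\omega$, the skew-adjointness and commutation with $\Delta$ that make the Coriolis term drop out of every $H^s$ energy balance (the paper's Appendix B), and the bounded inversion of the fast operator on the complement of the $m=0$ spherical-harmonic modes with the same derivative count (the paper's Theorem \ref{thm:cL} combined with Lemma \ref{lemma:abstract}). The only difference is cosmetic: you work at the level of the scalar vorticity equation and $\pap^{-1}$, whereas the paper packages the same computation as the vector operator $\cL=\rgrad\Deltainv\curl(z\,\cdot^{\ssperp})$ and its $L^2$-projection $\pcL$ onto the zonal flows.
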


Our theoretical result proves computational and observational results in the literature mentioned at the beginning of this paper, especially Remark \ref{remark:intro}. Note that our result shows that the zonal-flow pattern becomes prominant with decreasing Rossby number $\ep\searrow0$. In other words, the time-averaged flow $\umean$ is only $O(\ep)$ away from a very restricted subspace consisting of {\bf longitude-independent zonal flows}. The initial data, on the other hand, do not need any filtering and can be arbitrarily far away from that subspace of zonal flows. Our proofs below will suggest that such unique pattern   is essentially due to the Coriolis parameter $z/\ep$ that varies meridionally from the strongest at the poles to zero on the equator.  

Rossby number $\ep$ is typically at magnitude 0.01$\sim$0.1 for Earth oceans, which results in a time scale of magnitude 10$\sim$100 Earth days according to Theorem \ref{thm:zonal}. This suggests that zonal flow patterns can occur at time scales  far below those used in the literature.  In fact, the Rossby number is even smaller for giant planets, leading to the direct observability of banded structures.

Our result for rotating incompressible Euler equations  uses the abstract framework of the following lemma.
\begin{lemma}\label{lemma:abstract}Consider time-dependent equation\[\vu_t={1\over\ep}\cL[\vu]+f(t,q)\]over certain spatial domain $\Omega$.
Here, $\ep>0$ is a scaling constant, $f(t,q)$  a source term and $\cL$   a linear operator independent of time. 
Let operator $\cP$ denote (some) projection onto the 
null space of $\cL$. Assume a priori $\vu, f(t,q), \cL[\vu],\cP[\vu]$ have enough regularity as needed. 

 Then, under the assumption\be\label{bound}\|\vu-\cP\vu\|_{H^{k_1}(\Omega)}\le C \|\cL[\vu]\|_{H^{k_2}(\Omega)} \ee
for some constant $C$,
holds true the following estimate on the \underline{time-average} of $\vu$,
\[\Big\|{1\over T}\int_0^T\vu\,dt -{1\over T}\int_0^T\cP\vu\,dt\Big\|_{H^{k_1}}\le \ep C\left({2M\over T}+M'\right)\]where   constants $M:=\max_{t\in[0,T]}\|\vu(t,\cdot)\|_{H^{k_2}}$ and $M':=\max_{t\in[0,T]}\|f(t,\cdot)\|_{H^{k_2}}$.\end{lemma}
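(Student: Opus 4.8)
The plan is to exploit the simple algebraic identity that comes from integrating the evolution equation against $\frac{1}{T}$ over $[0,T]$. Writing $\vu_t=\frac{1}{\ep}\cL[\vu]+f$ and integrating in time gives
\[\frac{1}{T}\int_0^T \cL[\vu]\,dt=\frac{\ep}{T}\bigl(\vu(T,\cdot)-\vu(0,\cdot)\bigr)-\frac{\ep}{T}\int_0^T f(t,q)\,dt,\]
so the $H^{k_2}$-norm of the left-hand side is bounded by $\frac{\ep}{T}\bigl(\|\vu(T)\|_{H^{k_2}}+\|\vu(0)\|_{H^{k_2}}\bigr)+\frac{\ep}{T}\int_0^T\|f\|_{H^{k_2}}\,dt\le \ep\bigl(\frac{2M}{T}+M'\bigr)$. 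This is the quantitative heart of the argument: the time-average of $\cL[\vu]$ is small of order $\ep$ purely because $\vu_t$ is a total time-derivative plus the bounded source $f$.

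Next I would bring in the coercivity-type hypothesis \eqref{bound}. Since $\cL$ is linear and time-independent, it commutes with the time-integral, so $\cL\bigl[\frac{1}{T}\int_0^T\vu\,dt\bigr]=\frac{1}{T}\int_0^T\cL[\vu]\,dt$, which we have just bounded. Likewise $\cP$ is a linear time-independent projection, so $\frac{1}{T}\int_0^T\vu\,dt-\cP\bigl[\frac{1}{T}\int_0^T\vu\,dt\bigr]=\frac{1}{T}\int_0^T(\vu-\cP\vu)\,dt$, and the left-hand side of the claimed estimate is exactly $\bigl\|\,\vutil-\cP\vutil\,\bigr\|_{H^{k_1}}$ where $\vutil:=\frac{1}{T}\int_0^T\vu\,dt$. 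Applying \eqref{bound} with $\vu$ replaced by $\vutil$ — which is legitimate once we note $\vutil$ inherits the required regularity by Minkowski's integral inequality — yields
\[\bigl\|\vutil-\cP\vutil\bigr\|_{H^{k_1}}\le C\,\bigl\|\cL[\vutil]\bigr\|_{H^{k_2}}=C\,\Bigl\|\tfrac{1}{T}\int_0^T\cL[\vu]\,dt\Bigr\|_{H^{k_2}}\le \ep C\Bigl(\tfrac{2M}{T}+M'\Bigr),\]
which is precisely the asserted bound.

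There is no serious obstacle here; the lemma is essentially a clean packaging of an energy-type cancellation, and the only points requiring care are bookkeeping ones. The main thing to verify honestly is that all the manipulations are justified under the stated a priori regularity: that $\cL$ and $\cP$ genuinely commute with $\int_0^T(\cdot)\,dt$ (interchange of a bounded/closed operator with a Bochner integral), that $\vutil$ lies in the domain where \eqref{bound} applies, and that $\|\frac{1}{T}\int_0^T g\,dt\|_{H^s}\le \frac{1}{T}\int_0^T\|g\|_{H^s}\,dt$. Each of these is standard once we assume, as the statement does, that $\vu,\,f,\,\cL[\vu],\,\cP[\vu]$ have "enough regularity as needed." I would therefore write the proof in three short lines: (i) integrate the PDE to bound $\|\frac{1}{T}\int_0^T\cL[\vu]\,dt\|_{H^{k_2}}$; (ii) commute $\cL,\cP$ past the average; (iii) invoke \eqref{bound}.
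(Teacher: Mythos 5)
Your proof is correct and follows essentially the same route as the paper's: integrate the equation in time to see that $\frac{1}{T}\int_0^T\cL[\vu]\,dt$ is $O(\ep)$ in $H^{k_2}$, commute $\cL$ and $\cP$ with the time integral, and invoke hypothesis \eqref{bound} applied to the time-average. The only cosmetic difference is that the paper first rewrites the right-hand side as $\frac{1}{\ep}\cL[\vu-\cP\vu]$ before averaging, whereas you subtract $\cP$ at the end; the content is identical.
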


\begin{remark}\label{remark:finite_D}The key hypothesis \eqref{bound} is automatically true in a finite-dimensional space if $\vu$ is a vector in $\mR^n$, $\cL$ a linear transform $\mR^n\mapsto\mR^n$ and $\pcL$ the $l^2$-projection onto $\nll\{\cL\}$. In such case, hypothesis \eqref{bound}, with the norms understood as $l^2$ norm on both sides, amounts to the boundedness of $\cL^{-1}:\textnormal{image}\{\cL\}\mapsto \mR^n/\nll\{\cL\}$  \end{remark}
\begin{proof}[Proof of Lemma \ref{lemma:abstract}]First, transform the original equation into
\[\vu_t={1\over\ep}\cL[\vu-\cP\vu]+f\]and apply time-averaging ${1\over T}\int_0^T\cdot\, dt$ on both sides
\[{1\over T}\left(\vu(T,\cdot)-\vu(0,\cdot)\right)={1\over\ep T}\int_0^T\cL[\vu-\cP\vu]dt+{1\over T}\int_0^T f(t,\cdot)dt\] Since all necessary regularities were assumed available and $\cL$ was assumed to be linear and independent of time, we argue that $\int_0^T\cdot\, dt$ and $\cL[\cdot]$ commute, so that the above equation becomes
\[{1\over T}\left(\vu(T,\cdot)-\vu(0,\cdot)\right)={1\over\ep}\cL\Big[{1\over T}\int_0^T \vu\,dt -{1\over T}\int_0^T\cP\vu\,dt\Big]+{1\over T}\int_0^T f(t,\cdot)\,dt.\]
Due to the factor ${1\over\ep}$ in the first term on the RHS, we have
\[\left\|\cL\Big[{1\over T}\int_0^T \vu\,dt -{1\over T}\int_0^T\cP\vu\,dt\Big]\right\|_{H^{k_2}}\le \ep\left({2M\over T}+M'\right).\] 
 Finally, apply estimate \eqref{bound} to arrive at the conclusion. 
\end{proof}

Note that, the constant $M$ used in the above lemma depends on size of the solution up until time $T$ and is {\it not} necessarily independent of $\ep$. A priori estimates uniform in $\ep$ are therefore in order. The proof requires considerations beyond the well established energy methods, which will be explained in Appendix B.

\begin{proof}[Proof of Main Theorem \ref{thm:zonal}]
Having Lemma \ref{lemma:abstract} and $\ep$-independent estimates in Theorem \ref{thm:uniform}, it suffices to study properties of properly defined operators $\cL$ (c.f. Definition \ref{definition:cL}) and $\cP$ (c.f. Lemma \ref{lemma:pcL}), and to finally prove estimate \eqref{bound} (c.f. Theorem \ref{thm:cL}).\end{proof}

The rest of the paper is organized as following. We start Section 2 with describing a version of the Hodge decomposition in terms of differential operators on $\mS$. The definitions of these operators are given the Appendix.  An elliptic operator $\cL$ that plays the same role as the $\cL$ in Lemma \ref{lemma:abstract} is defined by the end of Section 2. In Section 3, we characterize the null space of $\cL$, identifying $\nll\{\cL\}$ as the space of longitude-independent zonal flows (c.f. Lemma \ref{null:cL:zonal}). We also define the projection operator $\pcL$ and its complement.  In   Section 4, we obtain Sobolev-type estimates, in particual \eqref{bound},  regarding $\cL$ and $\pcL$ using the spherical coordinates and spherical harmonics. 
In Appendix A, we give the rigorous definitions of differential operators on surfaces such as $\mS$ and prove related properties. It is necessary to adopt coordinate-independent differential geometric tools  since any global coordinate system on $\mS$ is bound to have singularity issues. On the other hand, one can formally use spherical coordinates as well as cartesian coordinates for most of the arguments presented in this paper, knowing their validity is justified. In Appendix B, we prove an $\ep$-independent estimates after carefully examining commutability properties of some differential-integral operators on a sphere.
\section{\bf Hodge Decomposition}

The {\bf Hodge decomposition} theorem  (\cite{Warner:Diff}, \cite{Taylor:I}) confirms that for any $k$-form $\omega$ on an oriented compact Riemannian manifold, there exist a $(k-1)$-form $\alpha$, $(k+1)$-form $\beta$ and a harmonic $k$-form $\gamma$, s.t.
\[\omega=d\alpha+\delta\beta+\gamma.\]

In particular, if the manifold is a surface in the cohomology class of $\mS$  (loosely speaking, there is no ``hole'' or "handle"), then there exist two scalar-valued functions $\Phi$ (called potential) and $\Psi$ (called stream function) such that
\be\label{Hodge:vec}\vu=\vuirr+\vuinc\mbox{ \quad where \quad }\vuirr:=\grad \Phi\mbox{ and }\vuinc:=\rgrad \Psi.\ee
Please refer to \eqref{Hodge:vec:a} in the Appendix and the discussion that leads to it.

Moreover, the decomposition satisfies
\[\curl\vuirr=\dv\vuinc=0\]and\be\label{irr:inc} \vuirr=\grad\Delta_{\mS}^{-1}\dv\vu,\quad\quad \vuinc=\rgrad\Delta_{\mS}^{-1}\curl\vu.\ee
In other words, any (square-integrable) vector field on $\mS$ can be written as superposition of an incompressible and an irrotational vector fields that are determined by \eqref{irr:inc}. Such decomposition is unique because a harmonic scalar-valued function on a sphere (and any surface in the same cohomology class) is always constant and therefore $\Delta_{\mS}^{-1}$ is unique up to a constant.

For simplicity, we will use $\Delta$ for $\Delta_{\mS}$ from here on. Also, we assume that, unless specified otherwise,
\be\label{Delta:global:mean}\Deltainv f\mbox{ always has zero global mean over }\mS.\ee

We postpone the differential-geometric definitions and properties of $\grad, \rgrad, \curl, \dv, \Delta$ on $\mS$ till the Appendix.

Now, rearrange \eqref{Euler:vec} as\[{z\over\ep} \vup- \nabla_\vu\vu=\pt\vu+\grad P.\]
Observe that on the RHS, $\pt\vu$ is incompressible and $\grad P$ is irrotational. Thus, the RHS is the unique Hodge decomposition of the LHS, which satisfies the elliptic PDEs \eqref{irr:inc}. In particular, the incompressible part $\pt\vu$ is uniquely determined by \be\label{Euler:proj}\pt\vu=\rgrad\Deltainv \curl\left({z\over\ep} \vup- \nabla_\vu\vu\right).\ee 

This is indeed an equivalent formulation of  the original incompressible Euler's equation \eqref{Euler:vec}.

In the context of Lemma \ref{lemma:abstract}, we define the following operator
\begin{definition}\label{definition:cL} For any $\vu$, not necessarily div-free, define \be\label{def:cL}\cL[\vu]:=\rgrad\Deltainv \curl(z\vup).\ee
Here, $\Deltainv $ follows the convention \eqref{Delta:global:mean}.\end{definition}

Then, \eqref{Euler:proj} can be reformulated as,
\be\label{Euler:cL}\pt\vu+\rgrad\Deltainv \curl(\nabla_\vu\vu)={1\over\ep}\cL[\vu].\ee

\section{\bf Null Space of $\cL$ and associated $L^2$-orthogonal projection}
We observe that the definition \eqref{def:cL} naturally implies a sufficient condition for $\cL[\vu]=0$  is \[\curl(z\vup)=0.\] Indeed this is also necessary by the virtue of  \eqref{elliptic}, namely $\curl\rgrad=\Delta$, so that\[\curl\cL[\vu]=\curl\rgrad\Deltainv \curl(z\vup)=\curl(z\vup).\]
Thus, for velocity field $\vu$, not necessarily div-free,\be\label{null:cL:0}\cL[\vu]=0\iff\curl(z\vup)=0.\ee

Further analysis reveals the following lemma.
\begin{lemma}\label{null:cL:zonal}(Characterization of $\nll\{\cL\}$) For div-free $\vu$ with sufficient regularity,\be\label{null:cL}\quad\cL[\vu]=0\iff \vu=\rgrad g(z)=-g'(\cos\theta)\sin\theta\vep\ee 
for some function $g:[-1,1]\mapsto \mR$. Thus, we identify $\nll\{\cL\}$, when restricted to div-free velocity fields, with  the space of \;{\bf longitude-independent zonal flows}.
\end{lemma}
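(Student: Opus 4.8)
The plan is to leverage the already-established equivalence \eqref{null:cL:0}, which reduces the problem to analyzing when $\curl(z\vup)=0$ for a div-free field $\vu$. First I would rewrite $\curl(z\vup)$ using a product rule for the surface curl. Since $\vup$ is the $\pi/2$-rotation of $\vu$, one has the identity $\curl(z\vup)=\dv(z\vu)$ (up to sign conventions fixed in the Appendix), and since $\dv\vu=0$, this collapses to $\curl(z\vup)=\grad z\cdot\vu$. Thus the condition becomes $\grad z\cdot\vu=0$ pointwise on $\mS$; geometrically, $\vu$ must be everywhere orthogonal to $\grad z$, i.e. tangent to the level circles $\{z=\textnormal{const}\}$, which are precisely the latitude circles. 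In spherical coordinates $z=\cos\theta$, so $\grad z=-\sin\theta\,\vet$, and orthogonality to $\vet$ forces $\vu$ to be a multiple of $\vep$.

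Next I would upgrade this pointwise geometric statement to the claimed functional form $\vu=\rgrad g(z)$. Writing $\vu=a(\theta,\phi)\,\vep$, I would impose the remaining constraint $\dv\vu=0$. In spherical coordinates the divergence of $a\,\vep$ is proportional to $\pa_\phi a/\sin\theta$, so $\dv\vu=0$ forces $\pa_\phi a=0$, i.e. $a=a(\theta)$ depends only on colatitude, hence only on $z$. It then remains to recognize any such longitude-independent zonal field as $\rgrad g(z)$ for a suitable $g$: since $\rgrad(g(z))=-g'(z)\,\grad^\perp$-of-nothing... more carefully, $\rgrad h = \vet$-and-$\vep$ combination, and for $h=g(z)=g(\cos\theta)$ one computes $\rgrad g(z)=-g'(\cos\theta)\sin\theta\,\vep$, matching the displayed formula; choosing $g$ so that $-g'(\cos\theta)\sin\theta=a(\theta)$ is solvable (it is an ODE for $g$ in the variable $z=\cos\theta$, namely $g'(z)=-a(\arccos z)/\sqrt{1-z^2}$, integrable since $a$ is smooth and, as I should check, vanishes appropriately near the poles for $\vu$ to be a well-defined smooth tangent field). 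The converse direction is immediate: for $\vu=\rgrad g(z)$ we have $\dv\vu=\dv\rgrad g=0$ automatically, and $\curl(z\vup)=\grad z\cdot\rgrad g(z)=0$ since $\grad z$ and $\rgrad g(z)$ are parallel (both pointing along $\vep$ after the rotation), so $\cL[\vu]=0$ by \eqref{null:cL:0}.

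I would carry out the steps in this order: (1) invoke \eqref{null:cL:0}; (2) establish $\curl(z\vup)=\grad z\cdot\vu$ for div-free $\vu$ via the product rule and $\dv\vu=0$; (3) deduce $\vu\parallel\vep$ pointwise; (4) use $\dv\vu=0$ again to get longitude-independence; (5) integrate to produce $g$ and verify the coordinate formula; (6) check the converse. The main obstacle I anticipate is the care needed at the coordinate singularities $\theta=0,\pi$ (the poles): the spherical-coordinate manipulations in steps (2)–(5) are only formally valid away from the poles, and one must ensure that a field which is smooth on all of $\mS$ and longitude-independent in a punctured neighborhood genuinely arises from a function $g$ of $z$ that is regular on $[-1,1]$ — equivalently, that the behavior $a(\theta)\sim O(\sin\theta)$ near the poles (forced by smoothness of the tangent vector field $a\,\vep$) makes the ODE for $g$ integrable with $g\in C^\infty([-1,1])$ or at least the required regularity. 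This is exactly the kind of point the paper defers to Appendix A's coordinate-free framework, so I would either argue it there or note that $\rgrad\Deltainv\curl$ maps into smooth fields and the identification is consistent with that regularity.
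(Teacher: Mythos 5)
Your proposal is correct, and its first half --- reducing via \eqref{null:cL:0} to $\curl(z\vup)=0$, then applying the product rule \eqref{prod} with $\dv\vu=0$ to obtain the pointwise condition $\grad z\cdot\vu=0$ --- is exactly the paper's argument. Where you genuinely diverge is in upgrading that pointwise condition to the functional form $\vu=\rgrad g(z)$: the paper stays at the level of the stream function, writing $\vu=\rgrad\Psi$ by the Hodge decomposition so that $\grad z\cdot\rgrad\Psi=0$ becomes $\grad z\parallel\grad\Psi$, whence $\Psi$ is constant on the latitude circles and is itself the desired function $g$ of $z$; the displayed formula then drops out of $\rgrad z=-\sin\theta\,\vep$. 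You instead work with velocity components, writing $\vu=a(\theta,\phi)\,\vep$, using $\dv\vu=0$ a second time (via $\dv(a\vep)=\pap a/\sin\theta$) to force $\pa_\phi a=0$, and then integrating the ODE $-g'(\cos\theta)\sin\theta=a(\theta)$ to manufacture $g$. Both routes are valid. The paper's buys $g$ for free --- it is the already smooth, globally defined stream function, so there is no ODE to solve and no separate regularity check at the poles --- whereas yours is more elementary and coordinate-explicit but obliges you to verify, as you correctly flag, that $a(\theta)=O(\sin\theta)$ near $\theta=0,\pi$ (which smoothness of the tangent field does guarantee) so that $g'(z)=-a(\arccos z)/\sqrt{1-z^2}$ is integrable up to $z=\pm1$. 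One wording slip in your converse: $\grad z$ and $\rgrad g(z)=g'(z)(\grad z)^{\ssperp}$ are orthogonal, not parallel; their dot product still vanishes, which is all you need.
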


\begin{proof}[Proof of Lemma \ref{null:cL:zonal}] Apply the product rule \eqref{prod} in the Appendix to  $\curl(z\vup)$,\be\label{curlzvup}\curl(z\vup)=(\grad z)\cdot\vu+z\dv\vu=(\grad z)\cdot\vu\ee
where the $\cdot$ product is given by the natural metric on $\mS$ induced from $\mR^3$ and we used the incompressibility condition $\dv\vu=0$. Therefore, by \eqref{null:cL:0} and \eqref{curlzvup},  the null space of $\cL$ is identified as
\be\label{null:cL:1}\mbox{For any div-free $\vu$ },\cL[\vu]=0\iff (\grad z)\cdot\vu=0.\ee

Since $\grad z$ is in the meridional direction, \eqref{null:cL:1} implies any $\vu$ in the null space of $\cL$ flows in the zonal direction. But there is more than that. Hodge decomposition \eqref{Hodge:vec}, \eqref{irr:inc} implies
\be\label{dv:vu:Psi}\dv\vu=0\iff\vu=\rgrad\Psi(x,y,z)\ee
with $\Psi$ being a unique scalar function with zero global mean. Combining \eqref{dv:vu:Psi} it with \eqref{null:cL:1}, we have, for any incompressible velocity field $\vu$,
\[\vu=\rgrad\Psi\in\nll\{\cL\}\iff (\grad z)\parallel(\grad\Psi).\] The condition $(\grad z)\parallel(\grad\Psi)$ implies $\Psi$ is a function of $z$ only. Thus, we arrive at the conclusion. The very last term in \eqref{null:cL} is due to the fact that, in spherical coodinates,
\[\rgrad z=-\sin\theta\vep.\]
\end{proof}

It is then easy to show the following characterization of $\pcL$, the $L^2$-orthogonal-projection operator onto $\nll\{\cL\}$.
\begin{lemma}\label{lemma:pcL} (Characterization of $\pcL$) For any div-free vector field $\vu\in L^2(\mS)$, its $L^2$-orthogonal-projection onto $\nll\{\cL\}$ satisfies
\be\label{pcL}\pcL\vu=\dfrac{\left(\oint_{C(\theta)}\vu\cdot \vep\right)\vep}{\oint_{C(\theta)}\vep\cdot\vep}={1\over2\pi\sin\theta}\left(\oint_{C(\theta)}\vu\cdot \vep\right)\vep\ee
where  $\oint_{C(\theta)}$ is the line integral along the circle $C(\theta)$ at a fixed colatitude $\theta$. \end{lemma}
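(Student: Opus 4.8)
The plan is to show that the operator $\Pi$ defined by the right-hand side of \eqref{pcL} coincides with the $L^2(\mS)$-orthogonal projection onto $\nll\{\cL\}$, by verifying the two properties that characterize such a projection and invoking uniqueness: it suffices to check (i) $\Pi\vu\in\nll\{\cL\}$ for div-free $\vu$, and (ii) $\vu-\Pi\vu$ is $L^2(\mS)$-orthogonal to every element of $\nll\{\cL\}$. Recall from Lemma \ref{null:cL:zonal} (in particular \eqref{null:cL:1}) that, restricted to div-free fields, $\nll\{\cL\}$ is exactly the space of longitude-independent zonal flows. I use the area element $\sin\theta\,d\phi\,d\theta$ on $\mS$ and read $\oint_{C(\theta)}$ as integration against arclength $\sin\theta\,d\phi$; in particular $\oint_{C(\theta)}\vep\cdot\vep=2\pi\sin\theta$ is the length of $C(\theta)$, which is precisely the identity that makes the two displayed forms of $\pcL\vu$ agree.

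For step (i), I would first note that the $\vep$-coefficient of $\Pi\vu$, namely $\frac{1}{2\pi\sin\theta}\oint_{C(\theta)}\vu\cdot\vep$, depends on $\theta$ only; hence $\Pi\vu$ has no meridional component and, being $\phi$-independent, is div-free. Since $\grad z$ is meridional, $(\grad z)\cdot\Pi\vu=0$, so \eqref{null:cL:1} yields $\cL[\Pi\vu]=0$, i.e.\ $\Pi\vu\in\nll\{\cL\}$. The apparent $\csc\theta$ singularity at the poles is harmless: the Cauchy--Schwarz bound $\bigl(\oint_{C(\theta)}\vu\cdot\vep\bigr)^2\le 2\pi\sin^2\theta\int_0^{2\pi}(\vu\cdot\vep)^2\,d\phi$ integrates to $\|\Pi\vu\|_{L^2(\mS)}\le\|\vu\|_{L^2(\mS)}$, so $\Pi$ is a well-defined bounded operator. (Equivalently, writing the div-free $\vu$ via Hodge as $\vu=\rgrad\Psi$ and using $\rgrad\Psi\cdot\vep=\pat\Psi$ --- consistent with $\rgrad z=-\sin\theta\,\vep$ --- one gets $\oint_{C(\theta)}\vu\cdot\vep=2\pi\sin\theta\,\widehat\Psi'(\theta)$, where $\widehat\Psi$ is the azimuthal average of $\Psi$, whence $\Pi\vu=\widehat\Psi'(\theta)\,\vep=\rgrad\widehat\Psi$; since $\widehat\Psi$ is as smooth in $z=\cos\theta$ as $\Psi$ is on $\mS$, this recovers the form $\rgrad g(z)$ of Lemma \ref{null:cL:zonal} and makes the $O(\sin\theta)$ decay at the poles explicit.)

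For step (ii), take an arbitrary $\vw=\rgrad g(z)=-g'(\cos\theta)\sin\theta\,\vep\in\nll\{\cL\}$ and compute both inner products by Fubini. In $\langle\vu,\vw\rangle_{L^2(\mS)}$ the inner $\phi$-integral of $\vu\cdot\vep$ becomes $\frac{1}{\sin\theta}\oint_{C(\theta)}\vu\cdot\vep$, so that
\[\langle\vu,\vw\rangle_{L^2(\mS)}=-\int_0^\pi g'(\cos\theta)\sin\theta\,\Bigl(\oint_{C(\theta)}\vu\cdot\vep\Bigr)\,d\theta.\]
Carrying out the now trivial $\phi$-integration in $\langle\Pi\vu,\vw\rangle_{L^2(\mS)}$ produces the identical $\theta$-integral, hence $\langle\vu-\Pi\vu,\vw\rangle_{L^2(\mS)}=0$ for every $\vw\in\nll\{\cL\}$. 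With (i) and the uniqueness of the orthogonal projection onto the closed subspace $\nll\{\cL\}\subset L^2(\mS)$, this gives $\Pi=\pcL$.

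I do not anticipate a real obstacle --- the crux is the single Fubini identity above --- but the details requiring care are (a) the coordinate singularity at the poles in step (i), dealt with either by the Cauchy--Schwarz estimate or by the Hodge-representation computation that exhibits the vanishing of $\oint_{C(\theta)}\vu\cdot\vep$ explicitly, and (b) keeping the line-integral and area-element normalizations consistent, so that the factors of $2\pi$ and $\sin\theta$ land correctly and the two forms of $\pcL\vu$ in \eqref{pcL} genuinely coincide.
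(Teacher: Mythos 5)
Your proof is correct and follows essentially the same route the paper takes: the paper states the lemma without a formal proof, but its surrounding remarks (the orthogonality condition $\int_{\mS}(\pcL\vu)\cdot(\id-\pcL)\vu'=0$, the removable pole singularity, and the stream-function identity \eqref{proj:Psi} showing $\pcL$ is zonal averaging of $\Psi$) contain exactly the verification you carry out, with Section 4 re-deriving the same fact via the $m=0$ spherical-harmonic modes. Your explicit Fubini computation and the Cauchy--Schwarz bound at the poles simply make the paper's ``it is then easy to show'' rigorous.
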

Several remarks are in order. First, among all possible projection operators, we chose one that nullifies the $L^2(\mS)$ orthogonal complement of
$\nll\{\cL\}$.
Secondly, intuitively, $\pcL\vu$ at a given latitude is a uniform zonal flow equal to the mean circulation of $\vu$ at that latitude; thus, $(\id-\pcL)\vu$ is of zero circulation along the circle at a fixed latitude. Such intuition is consistent with the orthogonality condition
\[\int_{\mS}(\pcL\vu)\cdot(\id-\pcL)\vu'=0.\]
Thirdly, even though \eqref{pcL} runs into singularity at the poles $\theta=0$ and $\theta=\pi$, such singularity is removable. In fact,    apply the Stokes' theorem on the RHS of \eqref{pcL} so that, for $\theta\in(0,\pi)$,
\[\pcL\vu={1\over2\pi\sin\theta}\left(\int\!\!\!\!\int_{interior\; of \;C(\theta)}\curl\vu\right)\vep\]and by taking the limit as $\theta\to0+$ and $\theta\to\pi-$, we obtain\[\pcL\vu\Big|_{poles}={\bf 0}.\]

In terms of the stream function, for any div-free velocity field $\vu=\rgrad\Psi$ which amounts to $\vu=(\pa_\theta\Psi)\vet^\ssperp+\left({\pa_\phi\Psi\over\sin\theta}\right)\vep^\ssperp$ with removable singularity at the poles,
\be\label{proj:Psi}\pcL(\rgrad\Psi)={1\over2\pi}\left(\int_0^{2\pi}\pa_\theta\Psi\,d\phi\right)\vep=\rgrad
\left({1\over2\pi}\int_0^{2\pi}\Psi\,d\phi\right).\ee
In other words, $\pcL$ maps the stream function to its zonal means.

\section{\bf Key Estimates}
This section is dedictated to proving an estimate similar to \eqref{bound}. A convinient tool in studying Sobolev norms of functions on $\mS$ is the   spherical harmonics. 

To this end, introduce the spherical harmonc of   degree $l$ and order $m$,
\[Y_l^m(\phi,\theta)=N_l^me^{im\phi}Q_l^m(\cos\theta),\quad\mbox{ for }l=0,1,2,...\mbox{ and }m=-l,-l+1,...,0,...,l-1, l \]where the normalizing constant $N_l^m=\sqrt{{(2l+1)\over4\pi}{(l-m)!\over(l+m)!}}$ so that $\|Y_l^m\|_{L^2(\mS)}=1$. It satisfies the eigenvalue problem
\be\label{eig:Y}\Delta Y_l^m=-l(l+1)Y_l^m\ee and orthonomal condition\[\langle{Y_l^m}, {Y_{l'}^{m'}}\rangle_{L^2(\mS)}=\delta_{ll'}\delta_{mm'}.\]
Here and below, \[\langle f,g\rangle_{L^2(\mS)}:=\int_{\mS}\overline{f}g
\,d\Omega\] with $d\Omega$ being the area element of $\mS$ and locally equals $\sin\theta d\theta d\phi$.
The assoicated Legendre polynomial $Q_l^m(z)$ satisfies the general Legendre equation\[{d\over dz}\left(\sqrt{1-z^2}{d\over dz}Q_l^m(z)\right)+\left(l(l+1)-{m^2\over1-z^2}\right)Q_l^m=0\]and can be expressed via the Rodrigues's formula,
\[Q_l^m(z)={1\over 2^ll!}(1-z^2)^{m\over2}{d^{m+l}\over dz^{m+l}}(1-z^2)^l.\] 

In order to estimate the Sobolve norms (esp. $H^k$ norms) of $Y_l^m$, we take the $L^2(\mS)$ inner product of \eqref{eig:Y} with $Y$  (omitting indices for simplicity), invoke Green's identity \eqref{Delta:Green} to calculate
\[\label{IBP}\begin{split}l(l+1)=&l(l+1)\langle Y,Y\rangle_{L^2(\mS)}\\=&-\langle {Y},\Delta Y\rangle_{L^(\mS)}=\langle\grad Y,\grad Y\rangle_{L^2(\mS)}\end{split}\]
which implies
\[Y\in H^1(\mS)\quad\mbox{ so inductively }\quad Y\in H^k(\mS),\;\;k\ge0.\]
As a matter of fact, a little more rigor is needed in defining Sobolev norms on a manifold, but we will skip the technical details and only use the fact that
\[\label{harmonic:Hk}C'_k(1+l)^k\le\|Y_l^m\|_{H^k(\mS)}\le C_k(1+l)^k,\quad\mbox{ for }\quad l\ge0,\;\;k\ge0\]

This estimate allows us to use the following definition, among many other equivalent definitions (\cite{Taylor:I}), of the $H^k$ norm of a scalar-valued function $\Psi$ defined on $\mS$.  \begin{definition}For a scalar function $\Psi$ with $\int_{\mS}\Psi=0$ and series expansion
\be\label{Psi:Fourier}\Psi=\sum_{l=1}^\infty\sum_{m=-l}^l \psi_{l}^{m}Y_l^m,\quad\quad\mbox{ where } \psi_{l}^{m}=\langle{\Psi}, Y_l^m\rangle_{L^2(\mS)},\ee
define it $H^k$ norms, among other equivalent versions, as
\be\label{def:Hk}\|\Psi\|_{H^k}:=\sqrt{\sum_{l=1}^\infty\sum_{m=-l}^l (1+l)^{2k}|\psi_{l}^{m}|^2}.\ee
\end{definition}
\begin{remark}Here and below, we always start  series and sums with $l=1$ and assume $\psi_0^0=\int_{\mS}\Psi=0$.
\end{remark}
Consequently, we define $H^k$ norms for $\vu$.
\begin{definition}For a vector field $\vu$ with Hodge Decomposition \[\vu=\grad\Phi+\rgrad \Psi\mbox{ \quad with $\int_{\mS}\Phi=\int_{\mS}\Psi=0$},\]we define its $H^{k}$ norm, among other equivalent versions, as  
\be\label{def:vuHk:general}\|\vu\|_{H^k}:=\sqrt{\|\Phi\|^2_{H^{k+1}}+\|\Psi\|^2_{H^{k+1}}}.\ee

In particular, if $\vu$ is div-free with $\vu=\rgrad\Psi$ and $\int_{\mS}\Psi=0$, then
\be\label{def:vuHk}\left\|\rgrad\Psi\right\|_{H^k}=\sqrt{\sum_{l=1}^\infty\sum_{m=-l}^l (1+l)^{2(k+1)}|\psi_{l}^{m}|^2}.\ee\end{definition}
\begin{remark}Here and below, we always choose $\Phi$, $\Psi$ with zero global mean such that 
the above definition is consistent with $\|{\bf 0}\|_{H^k}=0$.\end{remark}
\begin{remark}\label{remark:Hk:orthogonal}
 Apparently, under above definition, div-free and curl-free vector fields are $H^k$-orthogonal.
\end{remark}

We now characterize operator $\cL$ using the spherical harmonics. Let incompressible velocity field \[\vu=\rgrad\Psi.\]Then, by definition \eqref{def:cL} and identity 
\eqref{curlzvup}
\[ \cL[\rgrad\Psi]=\rgrad\Deltainv \left(\grad z\cdot\rgrad\Psi\right)\]It is easy to verify that, in spherical coordinates,\[\grad z=-\sin\theta\,\vet\mbox{\quad and \quad} \rgrad\Psi=(\pa_\theta\Psi)\vet^\ssperp+\left({\pa_\phi\Psi\over\sin\theta}\right)\vep^\ssperp .\]Thus, combining the three equalities above, we obtain
\be\label{cL:stream}\cL[\rgrad\Psi ]=\rgrad\Deltainv \pa_\phi\Psi.\ee
\begin{lemma}(Spherical-harmonic representation of $\cL$.) For a scalar function $\Psi$ with a series expansion \eqref{Psi:Fourier}, the identity \eqref{cL:stream} leads to
\be\label{cL:Fourier}\cL[\rgrad\Psi]=\rgrad\sum_{l=1}^\infty\sum_{\substack{m=-l\\ m\ne0}}^l{-im\over l(l+1)}\psi_{l}^{m}Y_l^m.\ee\end{lemma}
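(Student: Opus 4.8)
The plan is to combine the coordinate-expression \eqref{cL:stream}, namely $\cL[\rgrad\Psi]=\rgrad\Deltainv\pa_\phi\Psi$, with the fact that the spherical harmonics $Y_l^m$ diagonalize both $\pa_\phi$ and $\Delta$. Concretely, $\pa_\phi Y_l^m = \pa_\phi\big(N_l^m e^{im\phi}Q_l^m(\cos\theta)\big) = im\,Y_l^m$, so $\pa_\phi$ acts on the $(l,m)$ coefficient by multiplication by $im$; and by \eqref{eig:Y} together with convention \eqref{Delta:global:mean}, $\Deltainv Y_l^m = -\tfrac{1}{l(l+1)}Y_l^m$ for $l\ge 1$ (the $l=0$ mode is excluded since we always assume zero global mean, and in any case $\pa_\phi$ annihilates it). Therefore, term by term, $\Deltainv\pa_\phi(Y_l^m) = \tfrac{-im}{l(l+1)}Y_l^m$.

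First I would write out the series $\Psi=\sum_{l\ge1}\sum_{m=-l}^l\psi_l^m Y_l^m$ from \eqref{Psi:Fourier} and apply $\pa_\phi$ inside the sum, which is justified by the assumed regularity of $\Psi$ (enough derivatives for the manipulations; one can note that for $\Psi$ smooth the series and its $\phi$-derivative converge in the relevant Sobolev spaces, so $\pa_\phi$ commutes with the summation). This yields $\pa_\phi\Psi=\sum_{l\ge1}\sum_{m=-l}^l im\,\psi_l^m Y_l^m$, where the $m=0$ terms drop out, leaving $\sum_{l\ge1}\sum_{m\ne0} im\,\psi_l^m Y_l^m$. Next I would apply $\Deltainv$, again term by term (legitimate because $\Deltainv$ is a bounded operator on the zero-mean subspace, as follows from the eigenvalue bounds recorded just before the definitions), producing $\Deltainv\pa_\phi\Psi=\sum_{l\ge1}\sum_{m\ne0}\tfrac{-im}{l(l+1)}\psi_l^m Y_l^m$. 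Finally, substituting this into \eqref{cL:stream} and pulling $\rgrad$ out front gives exactly \eqref{cL:Fourier}.

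I do not expect any serious obstacle here: the statement is essentially a bookkeeping consequence of \eqref{cL:stream} and the spectral properties of $\pa_\phi$ and $\Delta$ on $\mS$. The only point requiring a word of care is the interchange of the infinite summation with the operators $\pa_\phi$, $\Deltainv$, and $\rgrad$; this is the mildest of the technical steps and is handled by the a priori regularity assumptions on $\Psi$ (sufficient to place $\Psi$ in some $H^k$ with $k$ large enough that all three operations act continuously, so the manipulations hold in $L^2(\mS)$ and hence coefficient-wise). Once that is granted, the identity is immediate.
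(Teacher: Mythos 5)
Your proposal is correct and follows exactly the paper's argument: apply \eqref{cL:stream}, use $\pa_\phi Y_l^m = im\,Y_l^m$ and $\Deltainv Y_l^m = -\tfrac{1}{l(l+1)}Y_l^m$ for $l\ge 1$ term by term, and note that the $m=0$ (and $l=0$) modes drop out. The extra care you take about interchanging the operators with the infinite sum is a harmless refinement of what the paper leaves implicit.
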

Here, we used the fact that $\pa_\phi Y_l^m=imY_l^m$ and $\Deltainv Y_l^m=-{1\over l(l+1)}Y_l^m$ for $l\ge1$.  We exclude $l=0$ from the series due to $\psi_0^0=\int_{\mS}\Psi=0$. We also exclude $m=0$ since it doesn't contribute to \eqref{cL:Fourier} anyway.

We now use spherical harmonics to characterize the
projection operator $\pcL$ given in \eqref{proj:Psi}. It follows from \eqref{cL:Fourier} that
\[\vu\in\nll\{\cL\}\mbox{\quad iff \quad}\vu=\rgrad\sum_{l=1}^\infty\sum_{m=0}\psi_{l}^{m}Y_l^m,\]
which is consistent with Lemma \ref{null:cL:zonal} since $Y_l^0$ is a function of $\theta$ only.

Thus, the only modes that survive $\pcL$ are those with $m=0$ since  $\pcL$ is an $L^2$-orthogonal projection and \[\int_{\mS}\rgrad\overline{ Y_{l}^{m}}\cdot\rgrad{Y_{l'}^{m'}}=-\int_{\mS}\Delta\overline{Y_{l}^{m}}  Y_{l'}^{m'}=l(l+1)\delta_{ll'}\delta_{mm'}.\] 
\begin{lemma}(Spherical-harmonic representation of $\pcL$.) For a scalar function $\Psi$ with a series expansion \eqref{Psi:Fourier}, 
\begin{align}\label{proj}\pcL(\rgrad\Psi)=&\rgrad\left(\sum_{l=1}^\infty\sum_{m=0}\psi_{l}^{m}Y_l^m\right)\\
\label{c_proj}(\id-\pcL)(\rgrad\Psi)=&\rgrad\left(\sum_{l=1}^\infty\sum_{\substack{m=-l\\ m\ne0}}^{l}\psi_{l}^{m}Y_l^m\right)\end{align}\end{lemma}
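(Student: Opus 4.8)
The final statement to prove is the spherical-harmonic representation of $\pcL$ and its complement $\id-\pcL$, namely equations \eqref{proj} and \eqref{c_proj}.

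\medskip

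The plan is to exploit the spherical-harmonic diagonalization of $\cL$ already established in \eqref{cL:Fourier} together with the $H^k$-orthogonality of the modes $\rgrad Y_l^m$. First I would recall that, by \eqref{cL:Fourier}, a div-free field $\vu=\rgrad\Psi$ with expansion \eqref{Psi:Fourier} lies in $\nll\{\cL\}$ precisely when all coefficients $\psi_l^m$ with $m\ne0$ vanish, so that $\nll\{\cL\}$ (restricted to div-free fields) is exactly the closed span of $\{\rgrad Y_l^0\}_{l\ge1}$ — this is the content already noted right before the lemma and is consistent with Lemma \ref{null:cL:zonal}. Next I would invoke the orthogonality relation $\int_{\mS}\rgrad\overline{Y_l^m}\cdot\rgrad Y_{l'}^{m'}=l(l+1)\delta_{ll'}\delta_{mm'}$ (displayed just above the statement), which shows that the family $\{\rgrad Y_l^m\}$ is an orthogonal basis for the div-free fields, with the $m=0$ subfamily spanning $\nll\{\cL\}$ and the $m\ne0$ subfamily spanning its $L^2(\mS)$-orthogonal complement. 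Since $\pcL$ is \emph{defined} as the $L^2$-orthogonal projection onto $\nll\{\cL\}$, it must therefore act on the expansion $\rgrad\Psi=\sum_{l\ge1}\sum_{m}\psi_l^m\,\rgrad Y_l^m$ by simply retaining the $m=0$ terms and killing the rest; this gives \eqref{proj}, and \eqref{c_proj} follows as $\id-\pcL$.

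\medskip

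The one point that needs a little care, and which I expect to be the main (mild) obstacle, is the consistency of this spectral formula with the integral formula \eqref{proj:Psi} already derived for $\pcL$: one should check that $\frac{1}{2\pi}\int_0^{2\pi}\Psi\,d\phi=\sum_{l\ge1}\psi_l^0 Y_l^0$, i.e. that azimuthal averaging of $\Psi$ precisely extracts the $m=0$ harmonics. This is immediate from $Y_l^m(\phi,\theta)=N_l^m e^{im\phi}Q_l^m(\cos\theta)$ and $\frac{1}{2\pi}\int_0^{2\pi}e^{im\phi}\,d\phi=\delta_{m0}$, applied termwise to the series \eqref{Psi:Fourier} (convergence in $L^2(\mS)$, hence termwise integration against the bounded kernel of azimuthal averaging is legitimate). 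Having matched the two descriptions, one concludes that $\pcL(\rgrad\Psi)=\rgrad\big(\tfrac{1}{2\pi}\int_0^{2\pi}\Psi\,d\phi\big)=\rgrad\big(\sum_{l\ge1}\sum_{m=0}\psi_l^m Y_l^m\big)$, and subtracting from $\rgrad\Psi=\rgrad\big(\sum_{l\ge1}\sum_{m=-l}^l\psi_l^m Y_l^m\big)$ yields \eqref{c_proj}. No new estimates are required; the whole argument is bookkeeping on the orthogonal basis $\{\rgrad Y_l^m\}$ plus the elementary azimuthal-averaging identity.
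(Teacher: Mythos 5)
Your argument is correct and follows essentially the paper's own route: identifying $\nll\{\cL\}$ via \eqref{cL:Fourier} as the closed span of $\{\rgrad Y_l^0\}_{l\ge1}$ and using the orthogonality relation $\int_{\mS}\rgrad\overline{Y_l^m}\cdot\rgrad Y_{l'}^{m'}=l(l+1)\delta_{ll'}\delta_{mm'}$ to read off the $L^2$-orthogonal projection mode by mode. Your consistency check against the azimuthal-averaging formula \eqref{proj:Psi} is precisely the alternative derivation the paper itself notes immediately after the lemma, so nothing is missing.
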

Note that the above 2 equations can also be derived from \eqref{proj:Psi} together with the fact that
\[{1\over2\pi}\int_0^{2\pi}Y_l^m(\phi,\theta)d\phi=\delta_{m0}.\]

Combining \eqref{cL:Fourier} with \eqref{c_proj} and using the absence of $m=0$ modes from both series, we induce that, when $\cL$ is restricted to the image of $(\id-\pcL)$, its null space is trivial and its inverse is ``bounded'' (as noted in Remark \ref{remark:finite_D}, this is
automatically true for linear transform $\cL:\mR^n\mapsto\mR^n$).

More precisely,

\begin{theorem}\label{thm:cL}For any div-free vector field $\vu\in H^k(\mS)$ and $k\ge0$,
\[\begin{split}&\Big\|\vu-\pcL\vu\Big\|_{H^{k}}\\\le &\Big\|\cL\big[\vu-\pcL\vu\big]\Big\|_{H^{k+2}}=\left\| \cL[\vu]\right\|_{H^{k+2}}\end{split}\]\end{theorem}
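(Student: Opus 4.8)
The plan is to work entirely on the spherical-harmonic side, since Theorem \ref{thm:cL} is really just a statement comparing two weighted $\ell^2$ sequence norms. First I would write the div-free field as $\vu=\rgrad\Psi$ with $\Psi$ having zero global mean and expansion \eqref{Psi:Fourier}, so that by \eqref{c_proj} the field $\vu-\pcL\vu$ equals $\rgrad$ applied to the ``off-axis'' part $\sum_{l\ge1}\sum_{m\ne0}\psi_l^m Y_l^m$. Using the vector $H^k$ norm \eqref{def:vuHk}, this gives
\[\left\|\vu-\pcL\vu\right\|_{H^k}^2=\sum_{l=1}^\infty\sum_{\substack{m=-l\\ m\ne0}}^l(1+l)^{2(k+1)}|\psi_l^m|^2.\]

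Next I would compute the right-hand side the same way. By \eqref{cL:Fourier}, $\cL[\rgrad\Psi]=\rgrad\sum_{l\ge1}\sum_{m\ne0}\frac{-im}{l(l+1)}\psi_l^m Y_l^m$, and since $\pcL\vu$ lies in $\nll\{\cL\}$ we have $\cL[\vu-\pcL\vu]=\cL[\vu]$, which is the stated equality. Applying \eqref{def:vuHk} with $k$ replaced by $k+2$ to this expansion yields
\[\left\|\cL[\vu]\right\|_{H^{k+2}}^2=\sum_{l=1}^\infty\sum_{\substack{m=-l\\ m\ne0}}^l(1+l)^{2(k+3)}\,\frac{m^2}{l^2(l+1)^2}\,|\psi_l^m|^2.\]
Comparing the two series term by term, the inequality $\left\|\vu-\pcL\vu\right\|_{H^k}\le\left\|\cL[\vu]\right\|_{H^{k+2}}$ reduces to checking that for every admissible pair $(l,m)$ with $l\ge1$ and $1\le|m|\le l$,
\[(1+l)^{2(k+1)}\le(1+l)^{2(k+3)}\,\frac{m^2}{l^2(l+1)^2},\quad\text{i.e.}\quad \frac{l^2(l+1)^2}{(1+l)^4}=\frac{l^2}{(l+1)^2}\le m^2.\]
Since $|m|\ge1$ we have $m^2\ge1>\frac{l^2}{(l+1)^2}$, so the termwise bound holds (indeed strictly), and summing over $(l,m)$ gives the result.

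The only real subtlety — and the place I would be most careful — is bookkeeping on the index ranges and on the exponents in the Sobolev norms: one must use \eqref{def:vuHk} (the \emph{vector} norm, which carries the extra $(1+l)^2$ from the $\rgrad$) rather than \eqref{def:Hk}, must remember that $\Deltainv Y_l^m=-\frac{1}{l(l+1)}Y_l^m$ forces the $l=0$ mode out so there is no division by zero, and must note that the $m=0$ modes are exactly the ones killed by $\id-\pcL$, so they never appear on either side. Once those are pinned down the estimate is an immediate termwise comparison; there is no genuine analytic obstacle, only the need to track the weights $(1+l)^{2(k+1)}$ versus $(1+l)^{2(k+3)}m^2/(l^2(l+1)^2)$ consistently.
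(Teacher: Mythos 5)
Your proof is correct and follows essentially the same route as the paper: expand $\vu=\rgrad\Psi$ in spherical harmonics, use \eqref{c_proj} and \eqref{cL:Fourier} together with the norm \eqref{def:vuHk}, and reduce the inequality to the termwise comparison $(1+l)^{2(k+1)}\le(1+l)^{2(k+3)}\bigl|m/(l(l+1))\bigr|^2$ for $l\ge1$, $|m|\ge1$, which holds since the $m=0$ modes are absent from both series. Your explicit simplification to $l^2/(l+1)^2\le m^2$ is a clean way to verify the paper's key inequality.
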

\begin{proof} Consider the stream function $\Psi$ so that $\vu=\rgrad\Psi$. Combining  \eqref{def:vuHk} and \eqref{c_proj}, we
obtain
\[\Big\|\vu-\pcL\vu\Big\|_{H^{k}}=\sqrt{\sum_{l=1}^\infty\sum_{\substack{m=-l\\ m\ne0}}^l (1+l)^{2(k+1)}|\psi_{l}^{m}|^2}.\]
Combining \eqref{def:vuHk} and \eqref{cL:Fourier}, we obtain
\[\|\cL[\vu]\|_{H^{k+2}}=\sqrt{\sum_{l=1}^\infty\sum_{\substack{m=-l\\ m\ne0}}^l (1+l)^{2(k+3)}\left|{m\psi_{l}^{m}\over l(l+1)}\right|^2}.\]
The key observation here is that $m=0$ modes are absent in both series; thus, by a simple inequality
\[(1+l)^{2(k+1)}\le(1+l)^{2(k+3)}\left|{m\over l(l+1)}\right|^2\quad\mbox{ for }l\ge1,\;|m|\ge1,\]
we arrive at the conclusion!
\end{proof}

\section{\bf Appendix A: Preparation in Differential Geometry}
Let $M$ denote a 2-dimensional, compact, Riemmanian manifold without boundary, typically the unit sphere $M=S^{3-1}$  endowed with metric $g$ induced from the embedding Euclidean space $\mR^3$. Let $p\in M$ denote a point with local coordinates $(p_1,p_2)$. Any vector field $\vu$ in the tagent bundle $TM$ is identified with   a field of directional differential operator which is written in local coordinates as\[\vu=\sum_ia^{i}{\pa\over\pa p_i}.\]We use the notation
\[\nabla_{\vu} f:=\sum_i a^i {\pa f\over\pa p_i}\]
to denote the directional derivative of a scalar-valued function $f$ in the direction of $\vu$. Using the orthogonal projection $\mbox{Proj}_{T\mR^3\to TM}$ induced by the Euclidean metric of $\mR^3$, we define the covariant derivative of a vector field $\vv\in TM$ along another vector field $\vu\in TM$, \be\label{def:cov}\nabla_{\vu}\vv:=\mbox{Proj}_{T\mR^3\to TM}\sum_{i=1}^3(\nabla_\vu v_i)\ve_i.\ee
Here, $\vv$ is expressed in an orthonomal basis of $\mR^3$ as $\vv=v_1\ve_1+v_2\ve_2+v_3\ve_3$.

The metric $g$ is identified with a $(0,2)$ tensor, simply put, an $2\times 2$ matrix $(g_{ij})_{2\times 2}$ in local coordinates. Thus, the vector inner product follows
\[\rev{g({\pa\over\pa{p_i}},{\pa\over\pa{p_j}})}=g_{ij}\quad\mbox{ for }1\le i,j\le 2.\]

\subsection{Hodge Theory}(\cite{Warner:Diff,Taylor:I})

The Hodge *-operator, defined in an {\it orthonormal basis}\footnote{The existence of such basis is guaranteed by the Gram-Schmidt orthogonalization process.} ${\pa\over\pa p_1},\,{\pa\over\pa p_2}$, satisfies
\[*dp_1=dp_2,\;\;*dp_2=-dp_1,\;\;*1=dp_1\wedge dp_2,\;\;*(dp_1\wedge dp_2)=1.\]

Using the Hodge star operator, we define the co-differential for any $k$-forms $\alpha$ in an $n$-dimensional manifold,
\[\mbox{\bf codifferential}:\quad\delta\alpha:=(-1)^k*^{-1}d*\alpha=(-1)^{n(k+1)+1}*d*\rev{\alpha},\]
and in particular, for $n=2$,
\[\delta\alpha=-*d*\rev{\alpha}.\]In the case when the Riemannian manifold $M$ has no boundary, the codifferential is the adjoint of exterior differential w.r.t. $L^2(M)$ inner product induced by the given metric $g$,
\be\label{dual}\langle d\alpha,\beta\rangle_{\rev{L^2(M,g)}}=\langle \alpha,\delta\beta \rangle_{\rev{L^2(M,g)}}.\ee

The Hodge Laplacian (a.k.a. Laplace-Beltrami operator and Laplace-de Rham operator) is then defined by
\be\label{def:Hodge_L}\Delta_H:=d\delta+\delta d.\ee
In particular, for a scalar-valued function $f$  in a local basis $\left\{{\pa\over\pa{p_i}}\right\}$   with metric g, it is identified as \[\Delta_H f=-{1\over\sqrt{|g|}}\sum_{i,j} \pa_i(\sqrt{|g|}g^{ij}\pa_j f)\]where $(g^{ij})$ is the matrix inverse of $(g_{ij})$. Thus, on a surface $M$, the Hodge Laplacian $\Delta_H$ defined in\eqref{def:Hodge_L} amounts to  -1 times the surface Laplacian $\Delta_M$. In particular, if $M$ is a two-dimensional surface, then
\be\label{surface:L}\mbox{for scalar function }f,\quad\Delta_M f=-\delta df=*d*df\ee
since $\delta f=0$ on a two-dimensional manifold.

For now on, we will use $\Delta$ for $\Delta_M$.

The {\bf Hodge decomposition} theorem in its most general form states that for any $k$-form $\omega$ on an oriented compact Riemannian manifold, there exist a $(k-1)$-form $\alpha$, $(k+1)$-form $\beta$ and a harmonic $k$-form $\gamma$ satisfying $\Delta_H\gamma=0$, s.t.
\[\omega=d\alpha+\delta\beta+\gamma.\]
In particular, for any 1-form $\omega$ on a 2-dimensional manifold with the 1st Betti number $0$ (loosely speaking, there is no ``holes''), there exist two scalar-valued functions $\Phi,\,\Psi$ such that
\be\label{Hodge:1form}\omega=d\Phi+\delta(*\Psi)=d\Phi-*d\Psi.\ee
Here, we used the Hodge theory to equate the dimension of the space of harmonic $k$-forms on $M$ with the $k$-th Betti number of $M$. For the cohomology class containing the unit sphere $S^{3-1}$, the 0th, 1st and 2nd Betti numbers are respectively $1,0,1$.

\subsection{In Connection With Vector Fields}
Let  $\flat$ and $\sharp$ denote the musical isomorphism between $TM$ and $T^*M$ induced by the given metric $g$, i.e. for vector field $\va=\sum_ia^i\pa_i$ and covector field (1-form)
$A=\sum_iA_idp^i$
\[\begin{split}\mbox{index-lowering,\qquad\qquad}&(\sum_ia^i\pa_i)^\flat=\sum_{i,j}g_{ij}a^idp_j\\
   \mbox{index-raising,\qquad\qquad}&(\sum_iA_idp^i)^\sharp=\sum_{i,j}g^{ij}A_i\pa_j  
\end{split}\]

In connection with the metric $g$, for vector fields $\vu,\vv$,
\[*(\vu\cdot\vv)=\vu^\flat\wedge(*\vv^\flat)\]
so that
consistently,
\be\label{dot:product}\langle\vu,\vv\rangle_{L^2(M,g)}=\langle\vu^\flat,\vv^\flat\rangle_{L^2(M,g)}.\ee

In a 2-dimensional Riemannin manifold, the 
divergence and curl of a vector field $\vu\in TM$ are then defined as scalar-valued functions\footnote{Here and below,  0-forms  are identified with  scalar-valued functions.} on $M$,
\begin{align}\label{def:div} {\mbox{\bf divergence},\quad}&\dv\vu:=-\delta(\vu^\flat)=*d*(\vu^\flat)\\
\label{def:curl}\mbox{\bf curl},\quad&\curl\vu:=-*d(\vu^\flat)\end{align}

For a scalar field $f$, we define gradient and its $\pi/2$ rotation as
\begin{align}\label{def:grad}{\mbox{\bf gradient},\quad}&\grad f:=(df)^\sharp\\
\label{def:rgrad}\mbox{\bf rotated gradient},\quad&\rgrad  f:=(\delta(*f))^\sharp=-(*df)^\sharp\end{align}
We also define the counterclockwise $\pi/2$ rotation operator ${}^\ssperp$ acting on a vector field as
\be\label{def:perp}\vup:=-(*\vu^\flat)^\sharp\ee
so that, consistantly,\[\rgrad f=(\grad f)^\ssperp,\] 
and \[\dv \vu=\curl\vup.\]

It is then easy to use these definitions to verify the following properties, for scalar function $f$ on a surface,
\be\label{cancell}\curl\grad f =\dv\rgrad f=0\ee
due to $dd=0$ and $\delta\delta=0$; and\be\label{elliptic}\dv\grad f=\curl\rgrad f=-\delta d\, f\ee
with $-\delta d$ being the classical surface Laplacian $\Delta$ (c.f. \eqref{surface:L}).

To this end, the vector-field version of Hodge decomposition \eqref{Hodge:1form} becomes
\be\label{Hodge:vec:a}\vu=\grad \Phi+\rgrad \Psi.\ee  We note that, by the virtue of \eqref{elliptic}, the decomposition satisfies
\[ \dv\vu=\Delta\Phi,\quad\quad \curl\vu=\Delta\Psi.\]

We finally establish a version of the Green's identity and a version of the product rule on Riemannian manifolds. First, the duality relation \eqref{dual} together with \eqref{surface:L} and \eqref{dot:product} implies
\be\label{Delta:Green}\langle f,\Delta g\rangle_{L^2(M)}=-\langle\grad f,\grad g\rangle_{L^2(M,g)}.\ee
Secondly, as a consequence of  the product rule for differential $d$ acting on wedge product,
{for scalar function $z$ and vector field $\vu$}, we have
\be\label{prod}\curl(z\vup)=\dv(z\vu)=\grad z\cdot\vu+z\dv\vu.\ee

\subsection{Local Expression in Terms of Spherical Coordinates for $M=S^{2}$}Let $\lgt$ denote the logitude and $\lat$ the colatitude of a point on a sphere. 
Let $\vep,\,\vet$ denote the unit tangent vectors in the increasing directions of $\lgt$ and $\lat$. Then, at point $p$ that is away from the poles,\[\pap={\sin\lat}\vep,\qquad\qquad \pat=\vet,\]namely,
\be\label{ON}{1\over\sin\lat} \pap\mbox{\quad and \quad}\pat\mbox{\qquad form an orthonromal basis of\;\;}TM_p\ee
 Therefore, the musical isomorphisms, in $\lgt,\lat$ coordinates, satisfy \[({1\over\sin\lat} \pap)^\flat={\sin\lat} d\lgt\mbox{\quad and \quad}(\pat)^\flat=d\lat\mbox{\quad form an orthonomal basis of\;\;}T^*M_p.\]

In this context, the Hodge *-operator satisfies
\[\begin{split}\mbox{for 1-forms,\qquad}& *({A_1 } d\lgt+A_2d\lat)={A_1\over\sin\lat} d\lat-{A_2\sin\lat} d\lgt\\
   \mbox{for 0-forms and 2-forms,\qquad}& *({A} d\lgt \wedge d\lat)={A\over\sin\lat},\qquad *A={A\sin\lat} d\lgt \wedge d\lat
\end{split}\] 

The differential operators defined in \eqref{def:div} --- \eqref{def:rgrad} then become,
\begin{align*}\mbox{for vector field }\vu=&u_1\vep+u_2\vet\\
\dv\vu=&{1\over\sin\theta}(\pap u_1+\pat(u_2\sin\theta))\\
\curl\vu=&{1\over\sin\theta}(\pap u_2-\pat(u_1\sin\theta))\end{align*}
and\begin{align*}\mbox{for scalar field }f\;\;&\\
\grad f=&{1\over\sin\theta}\pap f\vep +\pat f\vet\\
\rgrad f=&\pat f\vep-{1\over\sin\theta}\pap f\vet\\
\Delta f=&{1\over \sin^2\theta}(\pap^2 f+\sin\theta\pat(\sin\theta\pat f))\end{align*}

\section{\bf Appendix B: Uniform Estimates  Independent of $\ep$}
In this section, we use energy methods to prove local-in-time existence of  classical solutions for the incompressible Euler equations {\it independent of } the Rossby number $\ep$. Rewrite the equation as in \eqref{Euler:cL}, \be\label{Euler:ap}\pt\vu+\rgrad\Deltainv \curl(\nabla_\vu\vu)={1\over\ep}\cL[\vu],\ee
where operator $\cL$, as in \eqref{def:cL}, is defined by \be\label{def:cL:ap}\cL[\vu]:=\rgrad\Deltainv \curl(z\vup).\ee 

The main challenge rises from the nontrivial geometry of $S^2$: only a selective set of differential-integral operators on $S^2$ commute with each other. Although this is not a problem regarding well-posedness with fixed $\ep$, it causes difficulties in obtaining $\ep$-independent estimates. The fact that our $\cL$ has variable coefficients adds another layer of difficulties.  In proving the following theorem, we will address these commutability issues specifically.

\begin{theorem}\label{thm:uniform}
Consider the  incompressible Euler equations \eqref{Euler:ap}, \eqref{def:cL:ap} on a rotating sphere $S^2$ with div-free initial data $\vu_0$. Given any integer $k>2$, assume  $\vu_0\in H^k(S^2)$. Then, there exists universal constants $C_0,T_0$ independent of $\ep$ so that
\[\|\vu(t,\cdot)\|_{H^k}\le C_0\|\vu_0\|_{H^k}\mbox{\quad for any }t\in \left[0,{T_0\over\|\vu_0\|_{H^k}}\right].\]
\end{theorem}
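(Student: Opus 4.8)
\textbf{Proof proposal for Theorem~\ref{thm:uniform}.}
The plan is to run a standard energy estimate in $H^k(S^2)$, $k>2$, for the incompressible Euler equations written in the form \eqref{Euler:ap}, and to show that the contribution of the singular term $\frac1\ep\cL[\vu]$ to the $H^k$ energy balance vanishes identically, so that no $\ep$-dependence enters the estimate. First I would introduce an appropriate notion of $H^k$ norm of $\vu$ adapted to the manifold --- using the spherical-harmonic/Hodge description from Section~4, so that for div-free $\vu=\rgrad\Psi$ one has $\|\vu\|_{H^k}^2=\sum_{l\ge1}\sum_m(1+l)^{2(k+1)}|\psi_l^m|^2$ --- and set up the a~priori estimate on the interval where the solution exists; local existence for fixed $\ep$ is classical, so the content is the uniform-in-$\ep$ bound. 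Differentiating the equation, pairing with $\vu$ in the $H^k$ inner product, and using incompressibility to control the quadratic term $\rgrad\Deltainv\curl(\nabla_\vu\vu)$, the commutator (Kato--Ponce / Moser) estimates on $S^2$ give the familiar bound $\frac{d}{dt}\|\vu\|_{H^k}^2\le C\|\vu\|_{H^k}^3$ \emph{provided} the rotation term drops out.

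The crucial point --- and, I expect, the main obstacle --- is to prove that $\langle\vu,\cL[\vu]\rangle_{H^k}=0$, or more precisely that the full contribution of $\frac1\ep\cL[\vu]$ after $k$ derivatives and pairing with $\vu$ vanishes, \emph{exactly}, for all $\ep$. The natural route is the spherical-harmonic representation \eqref{cL:Fourier}: if $\vu=\rgrad\Psi$ with $\Psi=\sum\psi_l^mY_l^m$, then $\cL[\vu]=\rgrad\sum\frac{-im}{l(l+1)}\psi_l^mY_l^m$, so in the $H^k$ inner product \eqref{def:vuHk} one gets $\langle\vu,\cL[\vu]\rangle_{H^k}=\sum_{l,m}(1+l)^{2(k+1)}\frac{-im}{l(l+1)}|\psi_l^m|^2$, which is purely imaginary; since the energy is real, taking the real part kills it. One must be careful, however, that the nonlinear term $\nabla_\vu\vu$ and the operator $\rgrad\Deltainv\curl$ applied to it do not reintroduce an $\ep$-coupling, and that all the differential-integral operators involved ($\rgrad$, $\Deltainv$, $\curl$, time-averaging / time-differentiation, and $\cL$) commute in the order in which they are used --- the paper flags exactly this as the subtle part. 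The safest presentation is to work spectrally: expand $\vu$ in vector spherical harmonics, note that $\cL$ acts diagonally (up to the purely imaginary multiplier $\tfrac{-im}{l(l+1)}$) while $\frac{d}{dt}\|\vu\|_{H^k}^2$ and the nonlinear term are manifestly real and $\ep$-free, and conclude that the $\ep$-dependent term contributes zero to $\frac{d}{dt}\|\vu\|_{H^k}^2$.

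Granting the cancellation, the remainder is routine: one obtains
\[
\frac{d}{dt}\|\vu(t)\|_{H^k}\le C\,\|\vu(t)\|_{H^k}^2,
\]
a Riccati inequality whose solution $y(t)=\|\vu(t)\|_{H^k}$ satisfies $y(t)\le\frac{y(0)}{1-Cty(0)}$, hence $y(t)\le 2y(0)$ for $t\le\frac{1}{2C\|\vu_0\|_{H^k}}$; choosing $T_0=\frac{1}{2C}$ and $C_0=2$ gives the claim, and a continuation argument upgrades this a~priori bound to an honest statement about the solution on $[0,T_0/\|\vu_0\|_{H^k}]$. The only genuinely $S^2$-specific work beyond the flat case is (i) justifying the Moser-type product and commutator estimates on the sphere in the Hodge/spherical-harmonic norm, and (ii) the commutativity bookkeeping for $\rgrad\Deltainv\curl$ and $\cL$ against $t$-derivatives; both are addressed by passing to the spherical-harmonic coefficients, where every operator in sight is a Fourier multiplier in $(l,m)$ and commutativity is automatic. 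I would therefore organize the proof as: (1) spectral set-up and $H^k$ norm; (2) the exact cancellation $\mathrm{Re}\,\langle\vu,\cL[\vu]\rangle_{H^k}=0$; (3) Moser estimate for the nonlinearity on $S^2$; (4) Riccati inequality and continuation.
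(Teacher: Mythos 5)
Your proposal is correct and follows essentially the same strategy as the paper: an $H^k$ energy estimate in which the singular term $\tfrac1\ep\cL[\vu]$ contributes exactly zero, followed by standard Moser-type control of the nonlinearity and a Riccati/continuation argument (the paper compresses these last steps into an appeal to standard energy methods \`a la Majda--Bertozzi). The one point of divergence is the mechanism of the cancellation. The paper first works at the $L^2$ level: it writes $z\vup-\cL[\vu]=(\id-\rgrad\Deltainv\curl)[z\vup]$, which is a gradient and hence $L^2$-orthogonal to the div-free field $\vu$, and combines this with the pointwise identity $\vu\cdot(z\vup)=0$ to get $\int_{S^2}\vu\cdot\cL[\vu]=0$; it then proves separately that $\Delta$ commutes with $\cL$ (via the joint diagonalization of $\Delta$ and $\pa_\phi$ on spherical harmonics) in order to lift this to $\int_{S^2}\Delta^{k/2}\vu\cdot\Delta^{k/2}\cL[\vu]=0$, restricting to even $k$ for simplicity. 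You instead evaluate $\langle\vu,\cL[\vu]\rangle_{H^k}$ directly in the spectral $H^k$ inner product, where $\cL$ acts as the multiplier $-im/(l(l+1))$, and observe that the resulting quadratic form is purely imaginary (indeed it vanishes identically by the $m\leftrightarrow-m$ symmetry of a real-valued stream function). The two computations are equivalent; your spectral version absorbs the commutativity bookkeeping that the paper isolates in a separate step and works for all $k$ at once, while the paper's version is more geometric and does not tie the argument to the spectral definition of the norm. I see no gap, granting (as both you and the paper do) the standard Kato--Ponce/Moser estimates on $S^2$ for the nonlinear term.
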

\begin{proof}For simplicity, we only prove the case when $k$ is even. 

First, we show that for any $\vu\in L^2(S^2)$,
\be\label{L2:ortho}\int_{S^2}\vu\cdot\cL[\vu]=0 \text{\quad if  $\dv\vu=0$}.\ee
Indeed, by definition \eqref{def:cL:ap} and Hodge decompositon \eqref{Hodge:vec}, \eqref{irr:inc}, we have
\[z\vup-\cL[\vu]=(\id-\rgrad\Deltainv\curl)[z\vup]=\grad\Delta\dv[z\vup]\]
which is curl-free and therefore $L^2$-orthogonal to a div-free flow $\vu$, namely,
\[\int_{S^2}\vu\cdot(z\vup-\cL[\vu])=0.\] Thus,
\[\int_{S^2}\vu\cdot\cL[\vu]=\int_{S^2}\vu\cdot(z\vup)=0.\]

Secondly, we show that $\Delta$ and $\cL$ commute. Indeed, for any incompressible flow $\vu=\rgrad\Psi$
 \begin{align*}\Delta\cL[\vu]&=\Delta\rgrad\Deltainv\pa_\phi\Psi&\text{by \eqref{cL:stream}}\\
&=\rgrad\Delta\Deltainv\pa_\phi\Psi&\text{by \eqref{surface:L}, \eqref{def:rgrad}} \\
&=\rgrad\Deltainv\Delta \pa_\phi\Psi& \end{align*}
The key step remaining is to show that $\Delta$ and $\pa_\phi$ commute. This can be done using the fact that spherical harmonics $Y_l^m$ are eigenfunctions for both $\Delta$ and $\pa_\phi$. More specifically, for any spherical harmonic  $Y_l^m$,
\[\pa_\phi Y_l^m=\pa_\phi e^{im\phi}Q_l^m(\cos\theta)=imY_l^m\]and therefore 
\[\Delta\pa_\phi Y_l^m=\Delta(imY_l^m)=-iml(l+1)Y_l^m=\pa_\phi\Delta Y_l^m.\]

Lastly, once the commutability of $\Delta$ and $\cL$ are established as above, we easily obtain for even integer $k\ge0$,
\[ \int_{S^2}\Delta^{k/2}\vu\cdot\Delta^{k/2}\cL[\vu]=\int_{S^2}\Delta^{k/2}\vu\cdot\cL[\Delta^{k/2}\vu]=0\]
where the second equality is due to \eqref{L2:ortho} and $\Delta^{k/2}\vu$ also being incompressible (note:  $\Delta$ and $\dv$ commute). Now, take the $L^2$ inner product of $\Delta^{k}\vu$ with both sides of \eqref{Euler:ap}, knowing that the RHS should vanish,
\[\int_{S^2}\Delta^k\vu\cdot\pt\vu+\Delta^k\vu\cdot\rgrad\Delta^{-1}\curl(\nabla_\vu\vu)=0\] and invoke the standard energy methods (e.g. \cite{Majda:book}) to arrive at conclusion, which is clearly $\ep$-independent. \end{proof}
\section{\bf Acknowledgments}
The work of the second author is sponsored in part by AFOSR contract
FA9550-08-1-0055. 


\begin{thebibliography}{10}


\bibitem{Ar:Kh:Hydro}
Arnold, Vladimir I.;  Khesin, Boris A.  \emph{Topological methods in hydrodynamics.}
Applied Mathematical Sciences, 125. Springer-Verlag, New York,  1998.

\bibitem{BMN:RSW} Babin, A.; Mahalov, A.; Nicolaenko, B. \emph{Global splitting and regularity of
rotating shallow-water equations.} European J. Mech. B Fluids, 16 (1997), no. 5,
725--754.  

\bibitem{BMN:NS} Babin, A.; Mahalov, A.; Nicolaenko, B. \emph{Global splitting,
integrability and regularity of 3D Euler and Navier-Stokes equations for
uniformly rotating fluids.} European J. Mech. B Fluids, 15 (1996), no. 3,
291--300. 



\bibitem{Ch:Ma:Fluid}  Chorin, Alexandre J.;  Marsden, Jerrold E.  \emph{A mathematical introduction to fluid mechanics.}
Third edition.
Texts in Applied Mathematics, 4. Springer-Verlag, New York,  1993.
\bibitem{Galperin:Huang:2004}Galperin, B.; H. Nakano; H. Huang; S. Sukoriansky. \emph{
The ubiquitous zonal jets in the atmospheres of giant planets
and Earth’s oceans}. Geophys. Res. Lett., 31 (2004), L13303,
doi:10.1029/2004GL019691.

\bibitem{zonal:beta}Galperin, B.; S. Sukoriansky; N. Dikovskaya; P. L. Read; Y. H. Yamazaki; R. Wordsworth. 
\emph{Anisotropic turbulence and zonal jets in rotating flows with a $\beta$-effect.}  
Nonlinear Processes in Geophysics, 13, 1 (2006), 83--98.

\bibitem{Jupiter:HST} Garc\'{y}a-Melendo, E.; S\'{a}nchez-Lavega, A.   \emph{A study of the stability
of Jovian zonal winds from HST images: 1995--2000}, Icarus, 152 (2001),
316--330.
\bibitem{Goncharov}Goncharov, Yevgeny. \emph{On existence and uniqueness of classical solutions to Euler
equations in a rotating cylinder.} Eur. J. Mech. B Fluids, 25 (2006), no. 3,
267--278. 
\bibitem{Huang:Galperin:2001} Huang, H.-P.; B. Galperin; S. Sukoriansky. \emph{Anisotropic spectra
in two-dimensional turbulence on the surface of a rotating sphere}, Phys.
Fluids, 13 (2001), 225--240.
\bibitem{Majda:book} Majda, Andrew J.; Bertozzi, Andrea L. \emph{
Vorticity and incompressible flow.}
Cambridge Texts in Applied Mathematics, 27. Cambridge University Press, Cambridge, 2002. xii+545 pp. 
\bibitem{Maximenko}Maximenko, N. A.; B. Bang; H. Sasaki. \emph{ Observational
evidence of alternating jets in the World Ocean}. Geophys.
Res. Lett., 32 (2005), L12607, doi:10.1029/2005GL022728.
\bibitem{Jupiter:link} NASA/JPL/University of Arizona. \verb+http://photojournal.jpl.nasa.gov/catalog/PIA02873+
\bibitem{Nozawa:1997} Nozawa, T.; S. Yoden. \emph{ Formation of zonal band structure in
forced two-dimensional turbulence on a rotating sphere}, Phys. Fluids, 9 (1997),
2081--2093.
\bibitem{Pedlosky:Geo}
Pedlosky, J. \emph{Geophysical fluid dynamics.} Springer-Verlag, Berlin, 1992.
\bibitem{Cassini}Porco, C., et al. \emph{Cassini imaging of Jupiter’s atmosphere, satellites
and rings}, Science, 299 (2003), 1541--1547.
\bibitem{Pacific:1}Roden, G. \emph{Upper ocean thermohaline, oxygen, nutrients, and flow
structure near the date line in the summer of 1993}, J. Geophys. Res., 103 (1998),
12,919 -- 12,939.
\bibitem{Pacific:2} Roden, G. \emph{Flow and water property structures between the Bering
Sea and Fiji in the summer of 1993}, J. Geophys. Res., 105 (2000), 28,595--28,612.
\bibitem{Galperin:2002} Sukoriansky, S.; B. Galperin; N. Dikovskaya. \emph{Universal spectrum
of two-dimensional turbulence on a rotating sphere and some basic
features of atmospheric circulation on giant planets}, Phys. Rev. Lett., 89 (2002),
124501.
\bibitem{Taylor:I} Taylor, Michael E. \emph{Partial differential equations. I. Basic theory.} Applied Mathematical Sciences, 115. Springer-Verlag, New York, 1996.
\bibitem{Vallis:1993} Vallis, G.; M. Maltrud. \emph{Generation of mean flows and jets on a
beta plane and over topography}, J. Phys. Oceanogr., 23 (1993), 1346--1362.
\bibitem{Warner:Diff} Warner, Frank W. \emph{Foundations of differentiable manifolds and Lie groups.} Corrected reprint of the 1971 edition. Graduate Texts in Mathematics, 94. Springer-Verlag, New York-Berlin, 1983.
\end{thebibliography}
\end{document}